\newtheorem{thm}{Theorem}[section]
\newtheorem{cor}[thm]{Corollary}
\newtheorem{lem}[thm]{Lemma}
\newtheorem{prop}[thm]{Proposition}
\theoremstyle{definition}
\newtheorem{defn}[thm]{Definition}
\newtheorem{rem}[thm]{Remark}
\DeclareMathOperator{\lk}{lk}
\DeclareMathOperator{\st}{st}
\DeclareMathOperator{\core}{core}
\DeclareMathOperator{\girth}{girth}
\begin{document}

\title[Cohen-Macaulay graphs with large girth] {Cohen-Macaulay graphs with large girth}
\author{D\^o Trong Hoang}
\address{Institute of Mathematics, Vietnam Academy of Science and Technology, 18 Hoang Quoc Viet, Hanoi, Vietnam}
\email{dotronghoang@gmail.com}
\author{Nguy\^en C\^ong Minh}
\address{Department of Mathematics, Hanoi National University of Education, 136 Xuan Thuy, Hanoi,
Vietnam}
\email{minhnc@hnue.edu.vn}
\author{Tr\^an Nam Trung}
\address{Institute of Mathematics, Vietnam Academy of Science and Technology, 18 Hoang Quoc Viet, Hanoi, Vietnam}
\email{tntrung@math.ac.vn}
\dedicatory{Dedicated to Professor Ng\^o Vi\^et Trung\\ on the occasion of his sixtieth birthday}
\subjclass[2010]{13D02, 05C90, 05E40.}
\keywords{Edge ideals, Cohen-Macaulay, Gorenstein, well-covered, vertex decomposable}
\date{}
\commby{}
\begin{abstract}
We classify Cohen-Macaulay graphs of girth at least $5$ and planar Gorenstein graphs of girth at least $4$. Moreover, such graphs are also vertex decomposable.
\end{abstract}
\maketitle
\section*{Introduction}

To each finite simple graph $G$ with the vertex set $V(G)\subseteq \{1, \ldots ,n\}$ and the edge set $E(G)$, one associates
the edge ideal $I(G)$ of the polynomial ring $R=k[x_1, \ldots , x_n]$ which is generated by all monomials $x_ix_j$ such that
$\{i, j\}\in E(G)$. Here $k$ is an arbitrary field. A graph $G$ is called {\it Cohen-Macaulay} (resp. {\it Gorenstein}) over
$k$, if $R/I(G)$ is a Cohen-Macaulay ring (resp. a Gorenstein ring).

It is a wide open problem to characterize graph-theoretically the class of Cohen-Macaulay graphs. However the Cohen-Macaulay property of graphs is characteristics dependent (see \cite[Exercise 5.3.31]{Vi}). Therefore the work on Cohen-Macaulay graphs now has focused on certain subclasses of graphs such as: chordal graphs, bipartite graphs and so on (see \cite{HH}, \cite{HHZ}, \cite{MKY},\cite{W1},\cite{W2}). Our aim of this paper is to classify Cohen-Macaulay graphs of girth at least $5$. Recall that the {\it girth} of a graph $G$, denoted by $\girth (G)$, is the length of any shortest cycle in $G$ or in the case $G$ is a forest we consider the girth to be infinite.

If $G$ is Cohen-Macaulay, then every maximal independent set of $G$ has the same size, namely $\alpha(G)$, the independence number of $G$ (see, e.g. \cite[Proposition $6.1.21$]{Vi}). Such a graph is called {\it well-covered} (see \cite{P1}). Thus we are naturally interested in characterizing well-covered graphs. However this problem turns out to be difficult (see \cite{P2}); and a striking result given in \cite{FHN1} is characterized only well-covered graphs of girth at least $5$.  This result plays an important role in our work to classify Cohen-Macaulay graphs of girth at least $5$ (see Theorem $\ref{T2}$).

We are next interested in Gorenstein graphs. As until now, we only know a classification of Gorenstein bipartite graphs (see \cite {HH}). If $G$ is a Gorenstein graph without isolated vertices, then $G$ is not only well-covered but also $G\setminus x$ is well-covered with $\alpha(G)=\alpha(G\setminus x)$ for any vertex $x$. Such a graph is called a member of $W_2$ (see \cite{Sp}). To characterize the class $W_2$ is also difficult (see \cite{P2}) and we only know a classification of triangle-free planar graphs in $W_2$ (see \cite{Pi1}). Moreover, if $G$ is such a graph, it is conjectured in \cite{RTY} that $I(G)^2$ is Cohen-Macaulay. In this paper, we will prove this conjecture (see Proposition \ref{EG1}). Using this result we are able to classify planar triangle-free Gorenstein graphs (see Theorem $\ref{T4}$).

The paper consists of three sections. In Section 1, we set up some basic notations, terminologies for the simplicial complex and the graph. In Section $2$, we classify Cohen-Macaulay graphs of girth at least $5$. In the last section, we classified planar Gorenstein graphs of girth at least $4$.

\section{Preliminaries}

Let $\Delta$ be a simplicial complex on $\{1,\ldots,n\}$. The Stanley-Reisner ideal of a simplicial complex $\Delta$ is a squarefree monomial ideal (see \cite{S}):
$$I_{\Delta} = (x_{j_1} \cdots x_{j_i} \mid j_1  <\cdots < j_i \ \text{ and } \{j_1,\ldots,j_i\} \notin \Delta)$$
of the polynomial ring  $R=k[x_1,\ldots,x_n]$, where $k$ is a field.

A $k$-algebra $k[\Delta] = R/I_{\Delta}$ is called the Stanley-Reisner ring of $\Delta$. We say that $\Delta$ is Cohen-Macaulay (resp. Gorenstein) (over $k$) if $k[\Delta]$ is Cohen-Macaulay (resp. Gorenstein). The dimension of a face $F\in\Delta$ is $\dim F = |F| -1$, where $|F|$ stands  for the cardinality of $F$, and the dimension of $\Delta$ is $\dim\Delta = \max\{\dim F \mid F \in \Delta\}$.

Our tool to study Cohen-Macaulayness of simplcial complexes is the notion of {\it vertex decomposible}. A simplicial complex $\Delta$ (not necessarily pure) is recursively defined to be vertex decomposable
if it is either a simplex or else has some vertex $v$ so that:
\begin{enumerate}
\item both $\Delta\setminus v$ and $\lk_{\Delta}v$ are vertex decomposable, and
\item no face of $\lk_\Delta(v)$ is a facet of $\Delta\setminus v$.
\end{enumerate}
Vertex decompositions were introduced in the pure case by Provan and Billera \cite{PB} and extended to non-pure complexes by Bj\"{o}rner and Wachs in \cite[Section 11]{BW}. It is well-known that if $\Delta$ is pure and vertex-decomposable then $\Delta$ is Cohen-Macaulay (see e.g. \cite{W1}).

We now recall some terminologies of graph theory. Two vertices $u, v$ of $G$ are adjacent if $uv$ is an edge of $G$. An independent set in $G$ is a set of vertices no two of which are adjacent to each other. An independent set of maximum size will be referred to as a maximum independent set of $G$, and the independence number of $G$, denoted by $\alpha(G)$, is the cardinality of a maximum independent set in $G$. An independent set $S$ in $G$ is maximal (with respect to set inclusion) if the addition to $S$ of any other vertex in the graph destroys the independence.

Let $\Delta(G)$ be the set of all independent sets in $G$. Then $\Delta(G)$ is a simplicial complex and is so-called the independence complex of $G$. Note that $I(G) = I_{\Delta(G)}$ and $\dim(\Delta(G)) = \alpha(G)-1$. Clearly, $G$ is well-covered if and only if $\Delta(G)$ is pure. A graph $G$ is called a vertex decomposable graph if so is its independence complex.

If $X \subseteq V(G)$, then $G[X]$ is the subgraph of $G$ spanned by $X$. By $G\setminus W$, we mean the induced subgraph $G[V \setminus W]$ for some $W \subseteq V(G)$. The neighborhood of a vertex $v$ of $G$ is the set $N_G(v) = \{w \mid w \in V(G) \text{ and } vw\in E(G)\}$, and let $N_G[v] = N_G(v) \cup \{v\}$; if there is no ambiguity on $G$, we use $N(v)$ and $N[v]$, respectively. Let $G_v=G\setminus N_G[v]$.

The vertex decomposability of $G$ can be interpreted in terms of $G$ itself by an observation (\cite{W1}[Lemma 4]) as follows.

A graph $G$ is vertex decomposable if $G$ is a totally disconnected graph (with no edges) or if it has some vertex $v$ so that:
\begin{enumerate}
\item $G\setminus v$ and $G_v$ are both vertex decomposable, and
\item no independent set in $G_v$ is a maximal independent set in $G\setminus v$.
\end{enumerate}

In order to study the vertex decomposability of a graph, it suffices to consider its connected graphs.

\begin{lem}[W1, Lemma 20]\label{lemCM} $G$  is vertex decomposable if and only if all its connected components are vertex decomposable.
\end{lem}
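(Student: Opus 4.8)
The plan is to pass to independence complexes and reduce the statement to a fact about simplicial joins. The key observation is that if $G$ is the disjoint union of its connected components $G_1,\ldots,G_c$, then a set of vertices is independent in $G$ exactly when its intersection with each $V(G_i)$ is independent in $G_i$, since no edge of $G$ joins distinct components. Hence the independence complex factors as the join $\Delta(G)=\Delta(G_1)*\cdots*\Delta(G_c)$, and by induction on $c$ it suffices to prove that for simplicial complexes $\Delta_1,\Delta_2$ on disjoint vertex sets $V_1,V_2$ the join $\Delta_1*\Delta_2$ is vertex decomposable if and only if both $\Delta_1$ and $\Delta_2$ are. Two elementary facts drive everything. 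First, for a vertex $v\in V_1$ one has
\[
(\Delta_1*\Delta_2)\setminus v=(\Delta_1\setminus v)*\Delta_2,\qquad \lk_{\Delta_1*\Delta_2}(v)=(\lk_{\Delta_1}v)*\Delta_2,
\]
both immediate from the definition of the join, because $v$ interacts only with the $\Delta_1$-factor. Second, since $V_1$ and $V_2$ are disjoint, every face of $\Delta_1*\Delta_2$ splits uniquely into its $V_1$-part and its $V_2$-part, and such a face is a facet exactly when it equals $F_1\cup F_2$ with $F_i$ a facet of $\Delta_i$.

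Both directions then run by one induction on $|V_1|+|V_2|$. For ($\Leftarrow$), assume $\Delta_1,\Delta_2$ are vertex decomposable. If both are simplices the join is a simplex and we are done; otherwise, say $\Delta_1$ is not a simplex and hence has a shedding vertex $v$ realizing its decomposition. By the identities above, $\lk_{\Delta_1*\Delta_2}(v)=(\lk_{\Delta_1}v)*\Delta_2$ and $(\Delta_1*\Delta_2)\setminus v=(\Delta_1\setminus v)*\Delta_2$ are joins on strictly fewer vertices whose factors are vertex decomposable, so they are vertex decomposable by induction. For the shedding condition~(2), suppose a facet of $(\Delta_1\setminus v)*\Delta_2$, written $F_1\cup F_2$ with $F_1$ a facet of $\Delta_1\setminus v$ and $F_2$ a facet of $\Delta_2$, were also a face of $(\lk_{\Delta_1}v)*\Delta_2$; comparing $V_1$-parts forces $F_1\in\lk_{\Delta_1}(v)$, contradicting that $v$ is a shedding vertex of $\Delta_1$. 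Thus $v$ sheds $\Delta_1*\Delta_2$.

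For ($\Rightarrow$), assume $\Delta_1*\Delta_2$ is vertex decomposable. If it is a simplex then each factor is a simplex and we are done; otherwise it has a shedding vertex $v$, lying in $V_1$ or $V_2$, say $V_1$ by symmetry. Then $(\lk_{\Delta_1}v)*\Delta_2$ and $(\Delta_1\setminus v)*\Delta_2$ are vertex decomposable joins on fewer vertices, so by induction $\lk_{\Delta_1}(v)$, $\Delta_1\setminus v$ and $\Delta_2$ are all vertex decomposable. It remains to check that $v$ sheds $\Delta_1$: if some $F_1\in\lk_{\Delta_1}(v)$ were a facet of $\Delta_1\setminus v$, then for any facet $F_2$ of $\Delta_2$ the set $F_1\cup F_2$ would be at once a face of $\lk_{\Delta_1*\Delta_2}(v)$ and a facet of $(\Delta_1*\Delta_2)\setminus v$, contradicting that $v$ sheds the join. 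Hence $\Delta_1$, and symmetrically $\Delta_2$, is vertex decomposable. The step I expect to require the most care is exactly this transfer of the shedding condition~(2) across the join in both directions; it is here that the disjointness of $V_1$ and $V_2$, which lets faces and facets split cleanly into their two parts, is indispensable.
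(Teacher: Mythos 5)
The paper does not prove this statement at all: it is quoted verbatim as \cite[Lemma 20]{W1} and used as a black box. Your argument --- factoring $\Delta(G)$ as the join $\Delta(G_1)*\cdots*\Delta(G_c)$ and proving that a join of complexes on disjoint vertex sets is vertex decomposable if and only if each factor is, with the shedding condition transferred through the clean splitting of faces and facets into their $V_1$- and $V_2$-parts --- is correct and is essentially the standard proof underlying the cited lemma (going back to Provan--Billera in the pure case), so it supplies a valid self-contained justification for what the paper merely cites.
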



\section{Cohen-Macaulayness versus Vertex decomposablity}

In this section we will classify Cohen-Macaulay graphs of girth at least $5$. First we recall a class $\mathcal{SQC}$ of well-covered graphs from \cite{RV}. This class is of interest since, as we will see, it contains all Cohen-Macaulay graphs of girth at least $5$.

A vertex $v$ of a graph $G$ is said to be {\it simplicial} if the induced subgraph of $G$ on the set $N[v]$ is a complete graph and we say this complete graph to be a simplex of $G$. A $5$-cycle $C_5$ of a graph $G$ is called {\it basic} if $C_5$ does not contain two adjacent vertices of degree three or more in $G$; a $4$-cycle $C_4$ is called {\it basic} if it contains two adjacent vertices of degree two, and the remaining two vertices belong to a complete subgraph or a basic $5$-cycle of $G$. A graph $G$ is in the class $\mathcal{SQC}$ if there are simplicial vertices $x_1,\ldots,x_m$; basic $5$-cycles $C^1,\ldots,C^s$; and basic $4$-cycles $Q^1,\ldots,Q^t$ such that
$$V(G) = \bigcup_{j=1}^m N[x_j] \cup \bigcup_{j=1}^s V(C^j) \cup \bigcup_{j=1}^t B(Q^j)$$
and this forms a partition of $V(G)$, where $B(Q^j)$ is the set of two vertices of degree $2$ of the basic $4$-cycle $Q^j$. Such the graph $G$ is well-covered \cite[Theorem $3.1$]{RV}. Moreover, from the proof of this result, we also have a formula to compute the independence number of such graphs:
\begin{equation}\label{EQ1}\alpha(G) = m + 2s + t.\end{equation}

The first main result of this section says that all the graphs $G$ in the class $\mathcal{SQC}$ are vertex decomposable. The proof is divided into a number of steps. First, we deal with well-covered simplicial graphs. A graph $G$ is said to be simplicial if every vertex of $G$ belongs to a simplex of $G$. Using a characterization due to Prisner, Topp and Vestergaard in \cite[Lemma 2]{PTV} of such graphs, we see that all well-covered simplicial graphs belong to the class $\mathcal{SQC}$. Moreover,

\begin{lem}[W2, Corollary 5.5]\label{L03} If $G$ is a (well-covered) simplicial graph, then $G$ is vertex decomposable.
\end{lem}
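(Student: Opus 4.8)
The plan is to argue by induction on the number of vertices, staying inside the class of well-covered simplicial graphs so that the inductive hypothesis applies to the two graphs produced by Lemma \ref{lemVD}. First I would invoke Lemma \ref{lemCM} to reduce to the case that $G$ is connected, and dispose of the base case in which $G$ has no edge (then $G$ is a single vertex, hence a simplex, and trivially vertex decomposable). For the inductive step I would use the Prisner--Topp--Vestergaard characterization in the form already quoted in the text: a simplicial graph is well-covered if and only if its simplexes partition the vertex set. Thus I may fix simplicial vertices $x_1,\dots,x_m$ whose simplexes $N[x_1],\dots,N[x_m]$ form a partition of $V(G)$. Since $G$ is connected with at least one edge, $v:=x_1$ has a neighbor $w\in N(v)$, and the key idea is to shed $w$ rather than $v$ itself (shedding $v$ may fail).

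The verification of the shedding condition is the soft part. Because $v$ is simplicial and $w\in N(v)$, the clique $N[v]$ is contained in $N[w]$; in particular $N(v)\subseteq N[w]$. Given any independent set $S$ in $G_w=G\setminus N[w]$, its vertices avoid $N[w]\supseteq N(v)$, so $v$ has no neighbor in $S$ and $v\notin S$, whence $S\cup\{v\}$ is independent in $G$. As $v\neq w$, this exhibits $v$ as a vertex of $G\setminus w$ that extends $S$, so $S$ is not maximal in $G\setminus w$. This is exactly condition (2) of Lemma \ref{lemVD}, and it uses only that $w$ is a neighbor of a simplicial vertex.

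It remains to check that $G\setminus w$ and $G_w$ are again well-covered simplicial, so that induction applies and condition (1) of Lemma \ref{lemVD} holds. For $G\setminus w$ this is immediate: deleting $w$ only shrinks the first simplex to $N[x_1]\setminus\{w\}$, which still contains $x_1$ and remains a clique, while each simplex $N[x_j]$ for $j\ge 2$ is untouched (it cannot contain $w$, since $w\in N[x_1]$ and the simplexes are disjoint, so $x_j$ keeps the same neighbors); the resulting simplexes partition $V(G)\setminus\{w\}$, and the criterion above gives that $G\setminus w$ is well-covered simplicial. For $G_w$, the containment $N[x_1]\subseteq N[w]$ shows the whole first simplex disappears, while each other simplex $N[x_j]$ is pruned to $N[x_j]\setminus N(w)$.

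The main obstacle is to see that these pruned pieces are still genuine simplexes of $G_w$, i.e. that each retains a simplicial vertex. Here I would use the disjointness of the partition decisively: the closed neighborhood of the simplicial vertex $x_j$ is precisely its simplex $N[x_j]$, which is disjoint from $N[x_1]\ni w$, so $w\notin N[x_j]$ and hence $w$ is \emph{not} adjacent to any $x_j$ with $j\ge 2$. Consequently each such $x_j$ survives in $G_w$, its closed neighborhood there is the subclique $N[x_j]\setminus N(w)$, so $x_j$ is still simplicial, and these subcliques partition $V(G_w)$; applying the characterization once more shows $G_w$ is well-covered simplicial. With both $G\setminus w$ and $G_w$ vertex decomposable by induction and the shedding condition verified, Lemma \ref{lemVD} yields that $G$ is vertex decomposable. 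I expect the only delicate point to be this preservation step for $G_w$, and in particular the observation that a neighbor of one simplicial vertex is never adjacent to the distinguished simplicial vertex of another simplex, which is exactly what rules out the degenerate case in which deleting $N[w]$ would strip a simplex of its simplicial vertex and destroy the structure.
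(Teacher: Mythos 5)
Your proof is correct and follows essentially the same route as the paper's: induct on $|V(G)|$, reduce to the connected case, use the Prisner--Topp--Vestergaard partition of $V(G)$ into disjoint simplexes $N[x_1],\dots,N[x_k]$, and shed a neighbour $w$ of the simplicial vertex $x_1$ rather than $x_1$ itself, checking that both $G\setminus w$ and $G_w$ remain well-covered simplicial so the induction applies. The only cosmetic difference is that you verify condition (2) of Lemma \ref{lemVD} directly by extending any independent set of $G_w$ with $x_1$, whereas the paper computes $\alpha(G\setminus w)=k$ and $\alpha(G_w)=k-1$ and invokes its gluing observation.
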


Next, we deal with graphs in the class $\mathcal{SC}$. A graph $G$ is called in the class $\mathcal{SC}$ if $V(G)$ can be partitioned into two disjoint subsets $S$ and $C$: the subset $S$ contains all vertices of the simplexes of $G$, and the simplexes of $G$ are vertex disjoint; the subset $C$ consists of the vertices of the basic $5$-cycles and the basic $5$-cycles form a partition of $C$. Obviously, the class $\mathcal{SC}$ is a subclass of the class  $\mathcal{SQC}$.

\begin{lem}\label{L04} If $G$ is a graph in the class $\mathcal{SC}$, then $G$ is vertex decomposable.
\end{lem}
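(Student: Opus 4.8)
The plan is to induct on the number of vertices of $G$, using Lemma~\ref{lemCM} to reduce at once to the case that $G$ is connected. If $G$ has no basic $5$-cycle, then by the definition of $\mathcal{SC}$ every vertex lies in a simplex, so $G$ is a simplicial graph; since $\mathcal{SC}$ is a subclass of the well-covered class $\mathcal{SQC}$, Lemma~\ref{L03} applies and $G$ is vertex decomposable. So I may assume $G$ contains a basic $5$-cycle $C = v_1v_2v_3v_4v_5$, and the goal is to produce a vertex on $C$ to which Lemma~\ref{lemVD} applies.

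Call a vertex \emph{high} if its degree in $G$ is at least $3$. Since $C$ is basic, the high vertices of $C$ form an independent set of the $5$-cycle, so there are at most two of them and they are non-adjacent. I will choose a vertex of $C$, labelled $v_1$, whose two cycle-neighbours $v_2,v_5$ both have degree $2$: if some vertex of $C$ is high I take $v_1$ to be high, and then $v_2,v_5$, being adjacent to a high vertex, cannot be high by basicness and hence have degree $2$; otherwise $C$ is an isolated $5$-cycle and any choice of $v_1$ works. I shed at this $v_1$, so that $N[v_1] = \{v_1,v_2,v_5\}$ together with the external neighbours of $v_1$, and $G_{v_1} = G\setminus N[v_1]$ retains the edge $v_3v_4$.

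Condition (2) of Lemma~\ref{lemVD} I verify by a domination argument rather than by a neighbourhood containment $N(u)\subseteq N[v_1]$, which cannot be used here because $C$ has no triangle. Suppose $I$ is an independent set of $G_{v_1}$ that is maximal in $G\setminus v_1$. In $G\setminus v_1$ the cycle $C$ has opened into the path $v_2v_3v_4v_5$, and $v_2,v_5\notin I$ since they lie in $N(v_1)$. As $v_2,v_5$ have degree $2$ in $G$, their only neighbours in $G\setminus v_1$ are $v_3$ and $v_4$ respectively, so maximality of $I$ forces both $v_3\in I$ and $v_4\in I$, contradicting that $v_3v_4$ is an edge. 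Hence no such $I$ exists and condition (2) holds.

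It remains to check condition (1), that $G\setminus v_1$ and $G_{v_1}$ are vertex decomposable, and this is where the real work lies. If $C$ has at most one high vertex, then $v_3,v_4$ have degree $2$ and survive in $G_{v_1}$ as an isolated edge, i.e. a new simplex, while the other simplexes and basic $5$-cycles are unaffected as vertex sets (deleting $v_2,v_5$ and the external neighbours of $v_1$ only lowers degrees, which preserves basicness); thus $G_{v_1}\in\mathcal{SC}$ with one fewer basic $5$-cycle. In this same case the vertices $v_2,v_3,v_4,v_5$ span an induced path that becomes a separate connected component of $G\setminus v_1$, so by Lemma~\ref{lemCM} it suffices to treat this forest, which is vertex decomposable, together with the remaining component, which lies in $\mathcal{SC}$ on fewer vertices; the induction hypothesis then finishes both. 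The genuine obstacle is a basic $5$-cycle carrying two (necessarily non-adjacent) high vertices, say $v_1$ and $v_3$: after shedding $v_1$ the surviving vertex $v_3$ is still attached to the rest of $G$, so neither $G_{v_1}$ nor $G\setminus v_1$ need lie in $\mathcal{SC}$. I expect to resolve this by a secondary reduction exploiting pendants created by the deletion: in $G_{v_1}$ the vertex $v_4$ has lost its neighbour $v_5$ and so becomes a degree-one, hence simplicial, vertex, while in $G\setminus v_1$ the degree-$2$ vertices $v_2,v_5$ become pendants on $v_3,v_4$. Shedding along these simplicial vertices through the neighbourhood-containment form of Lemma~\ref{lemVD} removes the associated simplexes, clears the external edges at $v_3$, and returns both graphs to the class $\mathcal{SC}$ on fewer vertices, after which the induction hypothesis applies. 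The main technical hurdle will be the bookkeeping in this two-high-vertex case: checking that each secondary shedding genuinely lands back in $\mathcal{SC}$ and that the partition into simplexes and basic $5$-cycles is correctly maintained throughout.
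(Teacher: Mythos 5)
Your choice of shedding vertex agrees with the paper's (a vertex $v_1$ of the basic $5$-cycle both of whose cycle-neighbours have degree $2$), and your verification of condition (2) of Lemma \ref{lemVD} by the domination argument at $v_2,v_5$ is correct and self-contained; the paper instead deduces (2) from the well-coveredness of $G\setminus v_1$ and $G_{v_1}$ together with $\alpha(G_{v_1})=\alpha(G\setminus v_1)-1$. The trouble is condition (1), where there are two genuine gaps. First, in your ``at most one high vertex'' case the claim that in $G_{v_1}$ ``the other simplexes and basic $5$-cycles are unaffected as vertex sets'' is false: forming $G_{v_1}$ deletes the \emph{external} neighbours of $v_1$, and by the partition each such neighbour lies inside some other simplex $N[x_j]$ or some other basic $5$-cycle $C^i$; in the latter case $C^i$ loses a vertex and is no longer a $5$-cycle in $G_{v_1}$. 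The repair, which is exactly what the paper does, is to note that a deleted vertex $c$ of $C^i$ has degree at least $3$, so by basicness its two cycle-neighbours have degree $2$ in $G$, hence become degree-one simplicial vertices of $G_{v_1}$ whose closed neighbourhoods re-partition $V(C^i)\setminus\{c\}$ into two pendant-edge simplexes.

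Second, the two-high-vertex case is simply not proved: you explicitly defer it (``I expect to resolve this\dots'', ``the main technical hurdle will be\dots''), so the induction is incomplete as written. Moreover, the obstacle you anticipate there is illusory and no ``secondary reduction'' is needed. In $G\setminus v_1$ the vertices $v_2$ and $v_5$ become pendant, hence simplicial, and $N[v_2]=\{v_2,v_3\}$, $N[v_5]=\{v_5,v_4\}$ are admissible simplex parts of an $\mathcal{SC}$-partition \emph{even though} $v_3$ has high degree and stays attached to the rest of the graph: the definition of $\mathcal{SC}$ only asks that the designated parts be closed neighbourhoods of simplicial vertices, pairwise disjoint and disjoint from the remaining basic $5$-cycles, all of which hold here because of the original partition and because degrees only drop. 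This exhibits $G\setminus v_1\in\mathcal{SC}$ directly in every case. The same device handles $G_{v_1}$: since $v_3$ and $v_4$ are adjacent they are not both high, so one of them, say $v_4$, has degree $2$ and $N_{G_{v_1}}[v_4]=\{v_4,v_3\}$ absorbs what is left of $C$, while the cycles hit by external neighbours of $v_1$ are re-partitioned as above. Once both graphs are placed in $\mathcal{SC}$ on fewer vertices, the induction closes as in the paper.
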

\begin{proof} We prove by induction on $|V(G)|$. If $|V(G)| < 5$, then $G$ is simplicial. Therefore the lemma follows from Lemma $\ref{L03}$.

Assume that $|V(G)|\geq 5$. If $G$ is disconnected, let $G_1,\ldots, G_m$ be components of $G$. Note that each $G_i$ is also in the class $\mathcal{SC}$. Since $|V(G_i)| < |V(G)|$, by the induction hypothesis, $G_i$ is vertex decomposable. Thus, $G$ is also vertex decomposable by \cite[Lemma 20]{W1}.

Assume that $G$ is connected.  Let $C^1,\ldots,C^s$ be basic $5$-cycles and $x_1,\ldots,x_t$ simplicial vertices of $G$ such that
$$V(C^1), \ldots, V(C^s), N[x_1], \ldots, N[x_t]$$
form a partition of $V(G)$. If $s = 0$, then the lemma follows from Lemma $\ref{L03}$. So we may assume that $s \geq 1$.
Write $C^1 =\{xy, yz,zu,uv,vx\}$ with $\deg_G(x)\geqslant 3$.

We first claim that $G\setminus x$ is vertex decomposable. Let $H = G\setminus x$. Since $C^1$ is a basic $5$-cycle of $G$, we imply that $\deg_H(y) = \deg_H(v) = 1$. Therefore, $C^2,\ldots,C^s$ are also basic $5$-cycles of $H$ and $x_1,\ldots,x_t,y,v$ are simplicial vertices of $H$. Clearly,
$$V(H) = V(C^2)\cup \cdots\cup V(C^s)\cup N_H[x_1]\cup\cdots\cup N_H[x_t]\cup N_H[y] \cup N_H[v]$$
and this is a partition of $V(H)$. In particular, $H$ belongs to $\mathcal{SC}$. Furthermore, $|V(H)| = |V(G)|-1$, so by induction we have $H$ is vertex decomposable, as claimed.

We next claim that $G_x$ is vertex decomposable. Let $L = G_x$. Since $C^1$ is a basic $5$-cycle, either $z$ or $u$ has degree $2$. Assume that $\deg_G(z)=2$, so that $z$ is a simplicial vertex of $L$. Without loss of generality, we may assume that $C^2,\ldots, C^m$ are all basic $5$-cycles which have vertices being adjacent to $x$. Observe that $C^{m+1}, \ldots,C^s$ are basic $5$-cycles of $L$ and $x_1,\ldots,x_t$ are simplicial vertices of $L$. For each $i=2,\ldots, m$, let $c_i$ be a vertex of $C^i$ that is adjacent to $x$ in $G$; and let $u_i$ and $v_i$ be two adjacent vertices of $c_i$ in the cycle $C^i$. Since $u_i$ and $v_i$ are of degree $2$ in $G$, we then have they are two simplicial vertices of $L$  and
$$V(C^{m+1}),\ldots,V(C^s),N_L[z], N_L[u_2], N_L[v_2], \ldots, N_L[u_m], N_L[v_m], N_L[x_1],\ldots,N_L[x_t]$$
form a partition of $V(L)$. Which implies that $L$ is in the class $\mathcal{SC}$. Since $|V(L)| < |V(G)|$, by induction, we have $L$ is vertex decomposable, as claimed.

We now turn to prove the lemma. Since $\alpha(H) = 2(s-1) + (t+2) = 2s+t$ and $\alpha(L) =2(s-m)+1+2(m-1)+t = 2s+t-1=\alpha(H)-1$, together two claims, we yield $G$ is vertex decomposable, as required.
\end{proof}

We now in position to prove that every member of $\mathcal{SQC}$ is vertex decomposable.

\begin{thm}\label{T1} If $G$ is a graph in the class $\mathcal{SQC}$, then $G$ is vertex decomposable. In particular, this graph is Cohen-Macaulay.
\end{thm}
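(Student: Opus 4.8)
The plan is to induct on $|V(G)|$, peeling off one basic $4$-cycle at a time until we land in the class $\mathcal{SC}$, where Lemma \ref{L04} applies. If $G$ is disconnected I would treat its components separately by Lemma \ref{lemCM}, so assume $G$ is connected; and if $G$ has no basic $4$-cycle then $G\in\mathcal{SC}$ and we finish by Lemma \ref{L04}. Otherwise fix a basic $4$-cycle $Q^1$ on vertices $a,b,c,d$ with edges $ab,bc,cd,da$, where $B(Q^1)=\{a,b\}$ are its two adjacent vertices of degree $2$, so that $N(a)=\{b,d\}$ and $N(b)=\{a,c\}$. Just as the proof of Lemma \ref{L04} decomposes at a degree-$\geq 3$ vertex of a basic $5$-cycle, here I would apply Lemma \ref{lemVD} at the support vertex $v=c$, which has degree at least three.

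First I would pin down the support. Since $c$ and $d$ are joined by an edge of $Q^1$ and each acquires a further neighbour from $Q^1$ (namely $b$ and $a$), if they lay on a common basic $5$-cycle (necessarily with $cd$ an edge of it) each would have degree at least three there, giving two adjacent degree-$\geq 3$ vertices and contradicting the definition of a basic $5$-cycle. Hence $c,d$ lie in a common simplex $\sigma=N[x_i]$, and neither of them is its apex $x_i$ (for instance $b\in N[c]$ is adjacent to no other neighbour of $c$, so $N[c]$ is not complete), whence $|\sigma|\geq 3$. Now set $H=G\setminus c$ and $L=G_c=G\setminus N[c]$. In $H$, deleting $c$ makes $b$ a pendant at $a$, so $\{a,b\}$ is a new simplex with apex $b$, while $\sigma$ shrinks to the still-complete $\sigma\setminus\{c\}$ with apex $x_i$; reading off the partition shows $H\in\mathcal{SQC}$, with one fewer basic $4$-cycle. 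In $L$, deleting $N[c]=\sigma\cup\{b\}$ destroys $Q^1$ and isolates $a$ (both of its neighbours $b,d$ are removed), so $a$ becomes a trivial simplex and again $L\in\mathcal{SQC}$. By the induction hypothesis both $H$ and $L$ are vertex decomposable.

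To conclude I would run the same numerical check as in Lemma \ref{L04}. From $\alpha(G)=m+2s+t$ one gets $\alpha(H)=\alpha(G)$ (we gain the simplex $\{a,b\}$ and lose one basic $4$-cycle) and $\alpha(L)=\alpha(G)-1$ (we lose the simplex $\sigma$ and one basic $4$-cycle but gain the isolated vertex $a$). Being in $\mathcal{SQC}$, both $H$ and $L$ are well-covered, so every independent set of $L$ has at most $\alpha(L)=\alpha(H)-1$ vertices and therefore cannot be a maximal, hence $\alpha(H)$-element, independent set of $H$; this is exactly condition (2) of Lemma \ref{lemVD}, and condition (1) holds by induction, so $G$ is vertex decomposable. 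For the final clause, vertex decomposability implies shellability, and since $G$ is well-covered the complex $\Delta(G)$ is pure, so shellability yields the Cohen-Macaulay property.

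The step I expect to be the main obstacle is verifying that $H$ and $L$ really stay in $\mathcal{SQC}$ in full generality, rather than in the single-cycle picture above. A single simplex $\sigma$ may support several basic $4$-cycles at once, and each of them must be tracked through both deletions, becoming a new edge-simplex or an isolated vertex in $L$ and a pendant simplex in $H$, with the $\alpha$-count re-derived accordingly (it again collapses to $\alpha(H)=\alpha(G)$ and $\alpha(L)=\alpha(G)-1$). More delicate still is controlling any neighbour of $c$ lying outside $\sigma$ and the $4$-cycles attached to it: were such a neighbour to sit on a basic $5$-cycle, deleting $N[c]$ could break that cycle, so one must argue that the $\mathcal{SQC}$-partition together with the degree constraints defining \emph{basic} prevents this (or else show that the broken cycle reassembles into simplexes). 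Re-exhibiting the partition of $V(L)$ in every configuration is the technical heart of the argument.
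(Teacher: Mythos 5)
Your overall strategy (induction on $|V(G)|$, reduce to $\mathcal{SC}$ when there are no basic $4$-cycles, otherwise decompose at the degree-$\geq 3$ vertex $c$ of a basic $4$-cycle via Lemma \ref{lemVD}, and use well-coveredness plus $\alpha(L)=\alpha(H)-1$ to verify condition (2)) is exactly the paper's. But there is a genuine gap in your structural analysis of where $c$ lives. You argue that $c$ and $d$ cannot lie on a common basic $5$-cycle and conclude ``hence $c,d$ lie in a common simplex.'' That inference is invalid: the definition of a basic $4$-cycle only requires each of the two degree-$\geq 3$ vertices to belong to \emph{some} simplex or \emph{some} basic $5$-cycle, not the same one, and your argument only excludes the single configuration in which $c$ and $d$ are adjacent on one basic $5$-cycle. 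The configuration in which $c$ lies on a basic $5$-cycle $C^1=\{cu_1,u_1y_1,y_1z_1,z_1v_1,v_1c\}$ (with $d$ elsewhere) is perfectly consistent with the definition, and it is precisely the paper's Case 1 --- treated there as the main case. In that situation your description of $H=G\setminus c$ and $L=G_c$ breaks down: deleting $c$ destroys $C^1$, turning $u_1,v_1$ into simplicial vertices of $H$, and in $L$ one must further argue that either $y_1$ or $z_1$ has degree $2$ and becomes simplicial. None of this appears in your write-up, so an entire case of the theorem is missing.

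Separately, even in the simplex case you only verify membership in $\mathcal{SQC}$ for a single $4$-cycle hanging off $c$, and you explicitly defer what you correctly identify as ``the technical heart'': $c$ may support several basic $4$-cycles $Q^1,\ldots,Q^l$, may be adjacent to vertices of other basic $4$-cycles $Q^{l+1},\ldots,Q^{l+r}$ and of other basic $5$-cycles $C^2,\ldots,C^p$, and each of these must be re-partitioned in $L$ (the $a_i$ become isolated simplicial vertices, the $b_{l+j}$ become simplicial, each broken $C^i$ contributes two simplicial vertices $u_i,v_i$, etc.), with the count $\alpha(L)=t+2s+m-1$ recovered from the new partition. Since this bookkeeping is what actually certifies the induction hypothesis applies to $H$ and $L$, flagging it as an obstacle without carrying it out leaves the proof incomplete. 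The concluding step (well-coveredness of $H$ and $L$ plus $\alpha(L)=\alpha(H)-1$ gives condition (2) of Lemma \ref{lemVD}, and purity plus shellability gives Cohen--Macaulayness) is fine.
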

\begin{proof} We prove by induction on $|V(G)|$. If $|V(G)| < 3$, then $G$ is a well-covered simplicial graph. Hence, $G$ is vertex decomposable by Lemma $\ref{L03}$.

Assume that $|V(G)|\geq 3$. Let $C^1,\ldots,C^s$ be basic $5$-cycles; $x_1,\ldots,x_t$ simplicial vertices; and $Q^1,\ldots,Q^m$ basic $4$-cycles of $G$ such that
$$V(G) =  \bigcup_{j=1}^t N[x_j] \cup \bigcup_{j=1}^s V(C^j) \cup \bigcup_{j=1}^m B(Q^j)$$
and this is a partition of $V(G)$, where $B(Q^j)$ is the set of two vertices of degree $2$ of the basic 4-cycle $Q^j$ for $j=1,\ldots,m$. If $m=0$, then $G$ is in the class $\mathcal{SC}$, and then $G$ is vertex decomposable by Lemma $\ref{L04}$.

If $m\geq 1$. Let $c$ be a vertex in a basic 4-cycle $Q^1=\{a_1b_1, b_1c, cd_1,d_1 a_1\}$ with $ \deg_G(c)\geq 3$. Write $\deg_G(a_1)=\deg_G(b_1) =2$ and $\deg_G(d_1)\geq 3.$
Without loss of generality, we may assume that $c\in V(Q^i)$ for $i=1,\ldots,l$ and $c\notin V(Q^i)$ for $i=l+1,\ldots,m$. We can write $Q^i =\{a_ib_i,b_ic,cd_i,d_ia_i\}$ with $\deg_G(a_i)=\deg_G(b_i) = 2$ for  $i=2,\ldots,l$. Note that $a_1, b_1,\ldots,a_l,b_l$ are distinct points, but $d_1,\ldots,d_l$ may be not distinct points.

We now distinguish two cases:

\textbf {Case 1:} $c$ lies in some basic $5$-cycle of $G$. Since $G$ is in $\mathcal{SQC}$, $c$ lies in only one basic $5$-cycle. We may assume that $c$ lies in $C^1$ and $C^1 =\{cu_1,u_1y_1,y_1z_1,z_1v_1,v_1c\}$. Since $\deg_G(c)\geqslant 3$, $\deg_G(u_1)=\deg_G(v_1)=2$.

We first claim that $H=G\setminus c$ is vertex decomposable. Indeed, since $\deg_H(b_1)=\cdots=\deg_H(b_l)=1$, $b_1,\ldots,b_l$ are simplicial vertices of $H$. It is easy to check that $u_1,v_1,b_1,\ldots,b_l, x_1,\ldots, x_t$ are all simplicial vertices; $C^2,\ldots,C^s$ are basic $5$-cycles; and $Q^{l+1},\ldots,Q^m$ are basic $4$-cycles of $H$. Moreover,
$$V(H) = N_H[u_1] \cup N_H[v_1]\cup \bigcup_{j=1}^l N_H[b_j] \cup \bigcup_{j=1}^t N_H[x_j]\cup \bigcup_{j=2}^{s} V(C^j) \cup \bigcup_{j=l+1}^m B(Q^j)$$
and this is a partition of $V(H)$. Thus, $H$ is in the class $\mathcal{SQC}$ and $|V(H)|=|V(G)|-1$. By induction, $H$ is vertex decomposable, as claimed.

Moreover, by Formula $(\ref{EQ1})$ we get $\alpha(H) = 1 + 1 + l + t + 2(s-1) + (m-l) = t+2s+m$.

We claim further that $L=G_{c}$ is also vertex decomposable. Indeed, it is clear that $a_1,\ldots, a_l$ are isolated vertices of $L$. Therefore, they are simplical vertices of $L$. Since $C^1$ is a basic $5$-cycle, either $y_1$ or $z_1$ has degree 2 in $G$. By symmetry, we can assume that assume $\deg_G(y_1)=2$. Then, $\deg_L(y_1)\leq 1$ , and then $y_1$ is a simplicial vertex of $L$. We can assume that each of $Q^{l+1},\ldots,Q^{l+r}$ has at least one vertex being adjacent to $c$; and every $Q^{l+r+1},\ldots,Q^m$ has no any vertices being adjacent to $c$. Write $Q^{j} = \{a_jb_j, b_jc_j,c_jd_j,d_ja_j\}$ with $\deg_G(a_j)=\deg_G(b_j)=2$ and $c$ is adjacient with $c_j$ for all $j=l+1,\ldots,l+r$. Hence, $b_{l+1},\ldots,b_{l+r}$ are simplicial in $L$.

We also can assume that each of $C^2,\ldots,C^p$ has at least one vertex being adjacient with $c$; and every $C^{p+1},\ldots,C^s$ has no any vertices being adjacent to $c$. For each $i = 2,\ldots, p$, let $C^i =\{u_iy_i,y_iz_i,z_iv_i,v_iw_i,w_iu_i\}$ with $c$ and $w_i$ are adjacent in $G$. So, $\deg_G(u_i) = \deg_G(v_i)=2$. Hence, both of $u_i$ and $v_i$ are simplicial in $L$.

Since $G$ is a member of the class $\mathcal{SQC}$, we conclude that $c\notin N_G[x_i]$ for all $i=1,\ldots, t$. Thus, $x_i$ is also simplicial in $L$ for all $i$.

In summary, $L$ has simplicial vertices
$$y_1, a_1,\ldots,a_l, b_{l+1},\ldots, b_{l+r}, u_2,v_2,\ldots, u_p,v_p, x_1,\ldots,x_t;$$
basic $5$-cycles $C^{p+1},\ldots,C^{s-1}$; and basic $4$-cycles $Q^{l+r+1},\ldots,Q^m$. Moreover,
\begin{align*}
V(L) = N_L[y_1]&\cup \bigcup_{j=1}^l N_L[a_j] \cup \bigcup_{j=1}^r N_L[b_{l+j}] \cup \bigcup_{j=2}^p \left(N_L[u_j] \cup N_L[v_j]\right)\\
&\cup \bigcup_{j=1}^t N_L[x_j]\cup \bigcup_{j=p+1}^s V(C^j) \cup \bigcup_{j=l+r+1}^m B(Q^j),
\end{align*}
and this is a partition of $V(L)$. Therefore, $L$ is in the class $\mathcal{SQC}$. By Formula ($\ref{EQ1}$),
$$\alpha(L) = 1 + l +r +2(p-1) +t+ 2(s- p ) + (m-l-r) = t + 2s +m-1 = \alpha(H)-1.$$
Since $|V(L)|<|V(G)|$, we have $L$ is vertex decomposable by induction. Then, $G$ is vertex decomposable.

\textbf {Case 2:} $c$ does not lies in any basic $5$-cycle of $G$. Then, $c$ belongs to only one of the simplices $N_G[x_1],\ldots,N_G[x_t]$. In the same way as the proof of Case $1$, we have $G$ is vertex decomposable. 

By \cite[Theorem $3.1$]{RV}, any graph in the class $\mathcal{SQC}$ is always well-covered. Then, $G$ is Cohen-Macaulay as required.
\end{proof}

An edge, in a graph $G$, incident with a point of degree 1 is called the {\it pendant} edge. Let $C(G)$ denote the set of all vertices which belong to basic $5$-cycles and let $P(G)$ denote the set of vertices  which are incident with pendant edges in $G$. Then, $G$ is in the class $\mathcal{PC}$ if $V(G)$ can be partitioned into $V(G)=P(G) \cup C(G)$ and the pendant edges form a perfect matching of $P(G)$. If $uv$ is a pendant edge in $G$ with $\deg(u)=1$, then $N[u]=\{u,v\}$, and then $u$ is a simplicial vertex in $G$. Hence, $\mathcal{PC}$ is a subclass of $\mathcal{SQC}$.

We are now ready to prove the main result of this paper.

\begin{thm}\label{T2} Let $G$ be a connected graph of girth at least $5$. Then, the following statements are equivalent:
\begin{enumerate}
\item $G$ is well-covered and vertex decomposable;
\item $G$ is Cohen-Macaulay;
\item $G$ is either a vertex or in the class $\mathcal{PC}$.
\item $G$ is in the class $\mathcal{SC}$.
\item $G$ is in the class $\mathcal{SQC}$.
\end{enumerate}
\end{thm}
\begin{proof} (1)$\Longrightarrow$(2) is well known.

(2)$\Longrightarrow$(3):\quad If $G$ is a Cohen-Macaulay graph, then $G$ is well-covered. By \cite{FHN1}, we have either $G$ is in the class $\mathcal{PC}$ or $G$ is one of six exceptional graphs shown in Figure $1$. Among these six exceptional graphs, only $K_1$ is Cohen-Macaulay (see \cite[Proposition 3.3]{Br}). Thus, $G$ is either a vertex or in the class $\mathcal{PC}$.

(3)$\Longrightarrow$(4) and (4)$\Longrightarrow$(5) hold true by definition of the classes $\mathcal{PC}, \mathcal{SC}$ and$\mathcal{SQC}$.

(5)$\Longrightarrow$(1) are done by Theorem \ref{T1}.

\begin{figure}[ht!]
\begin{minipage}{.30\textwidth}
\begin{center}
\scalebox{0.5}{\includegraphics{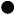}}\\
$K_1$
\end{center}
\end{minipage}
\begin{minipage}{.30\textwidth}
\begin{center}
\scalebox{0.5}{\includegraphics{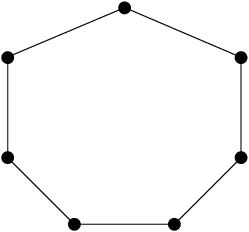}}\\
$C_7$
\end{center}
\end{minipage}
\begin{minipage}{.30\textwidth}
\begin{center}
\scalebox{0.5}{\includegraphics{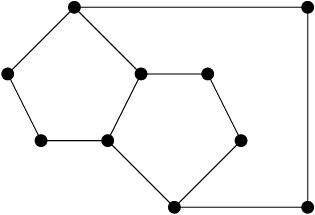}}\\
$P_{10}$
\end{center}
 \end{minipage}

 \vspace{1cm}
\begin{minipage}{.30\textwidth}
\begin{center}
\scalebox{0.5}{\includegraphics{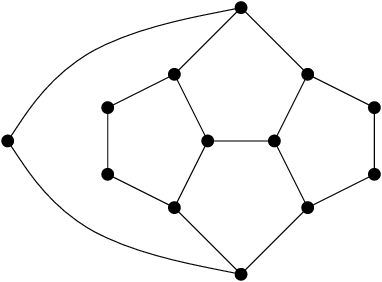}}\\
$P_{13}$
\end{center}
\end{minipage}
\begin{minipage}{.30\textwidth}
\begin{center}
\scalebox{0.5}{\includegraphics{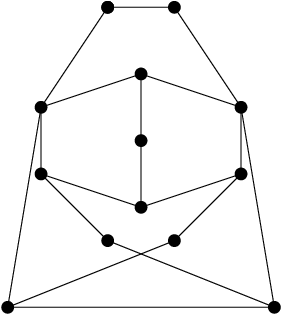}}\\
$Q_{13}$
\end{center}
\end{minipage}
\begin{minipage}{.30\textwidth}
 \begin{center}
\scalebox{0.5}{\includegraphics{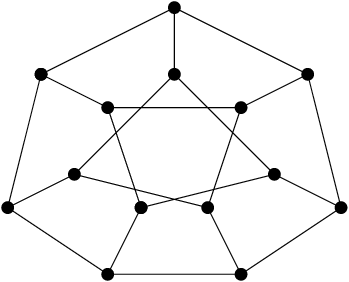}}\\
$P_{14}$
\end{center}
\end{minipage}
\caption{}\label{Fig:1}
\end{figure}


\end{proof}

The following corollary is immediate (also see \cite[Corollary $4.3$]{P2}).

\begin{cor} \label{cor3} Let $G\ne K_1$ be a connected graph of girth at least $6$. Then, $G$ is Cohen-Macaulay if and only if its pendant edges form a perfect matching of $G$.
\end{cor}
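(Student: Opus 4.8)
The plan is to derive this corollary directly from Theorem~\ref{T2} by showing that, for a connected graph $G \neq K_1$ of girth at least $6$, membership in the class $\mathcal{PC}$ is equivalent to having its pendant edges form a perfect matching. Since girth at least $6$ implies girth at least $5$, Theorem~\ref{T2} applies: $G$ is Cohen-Macaulay if and only if $G$ is a single vertex or $G \in \mathcal{PC}$. As we have excluded $G = K_1$, it remains only to translate the condition $G \in \mathcal{PC}$ into the stated combinatorial form.

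The key observation is that girth at least $6$ forbids $5$-cycles, so no basic $5$-cycle can occur and hence $C(G) = \emptyset$. Recall that $G \in \mathcal{PC}$ means $V(G) = P(G) \cup C(G)$ with $P(G) \cap C(G) = \emptyset$ and the pendant edges forming a perfect matching of $P(G)$. With $C(G) = \emptyset$ this forces $V(G) = P(G)$, so that the pendant edges form a perfect matching of the entire vertex set $V(G)$. Conversely, if the pendant edges of $G$ form a perfect matching of $V(G)$, then $V(G) = P(G)$, and taking $C(G) = \emptyset$ (which is consistent since $G$ has no $5$-cycles and thus no basic $5$-cycles) shows immediately that $G \in \mathcal{PC}$. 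Thus the two conditions coincide, and the corollary follows.

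The step requiring the most care is verifying that $C(G) = \emptyset$ genuinely follows from the girth hypothesis: a basic $5$-cycle is in particular a $5$-cycle, i.e.\ a cycle of length $5$, which cannot exist when every cycle has length at least $6$. This is the only geometric input and is straightforward. The remaining bookkeeping---matching the definition of $\mathcal{PC}$ against the perfect-matching condition---is purely formal once $C(G)$ is known to be empty. I would therefore structure the proof as a short two-line reduction: apply Theorem~\ref{T2}, then observe that the absence of $5$-cycles collapses the defining decomposition $V(G) = P(G) \cup C(G)$ of $\mathcal{PC}$ to $V(G) = P(G)$, which is exactly the assertion that the pendant edges form a perfect matching of $G$.
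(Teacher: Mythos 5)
Your argument is correct and is precisely the deduction the paper has in mind: the paper simply declares the corollary ``immediate'' from Theorem~\ref{T2}, and your observation that girth at least $6$ forces $C(G)=\emptyset$, collapsing the $\mathcal{PC}$ decomposition to $V(G)=P(G)$, is exactly that immediate step spelled out. No gaps.
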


We conclude this section with characterizing some special classes of Cohen-Macaulay graphs in which triangles are allowed. First we consider block-cactus graphs. A vertex $v$ of a graph $G$ is called a cut vertex of $G$ if $G \setminus v$ has more components than $G$. A connected graph with no cut vertex is called a block. A block of a graph $G$ is a subgraph of $G$ which is itself a block and which is maximal with respect to that property. A graph $G$ is called a block-cactus graph if every block is complete or a cycle. We have a characterization of Cohen-Macaulay block-cactus graphs as follows.

\begin{cor}\label{cor2} Let $G$ be a block-cactus graph. Then the following statements are equivalent:
\begin{enumerate}
\item $G$ is well-covered and vertex decomposable.
\item $G$ is Cohen-Macaulay.
\item $G$ is in the class $\mathcal{SQC}$.
\end{enumerate}
\end{cor}

\begin{proof} (1)$\Longrightarrow$(2): obviously.

(2)$\Longrightarrow$(3):\quad It suffices to prove for connected block-cactus graphs. Since $G$ is Cohen-Macaulay, $G$ is well-covered. By \cite[Theorem 3.2]{RV}, $G$ belongs to the class $\{ \text{4-cycle, 7-cycle} \} \cup \mathcal{SQC}$. Since both of 4-cycle and 7-cycle are not Cohen-Macaulay, $G$ is in the class $\mathcal{SQC}$.

(3)$\Longrightarrow$(1):\quad follows from Theorem \ref{T1}.
\end{proof}

If every block of a connected block-cactus graph $G$ is an edge or a cycle, then $G$ is called a cactus graph. Equivalently, $G$ is a cactus graph if and only if it is connected and two cycles have at most one vertex in common. A $3$-cycle in $G$ is called basic if it contains at least one vertex of degree $2$. Corollary $\ref{cor2}$ can restate more explicitly in a combinatorial way for Cohen-Macaulay cactus graphs as follows (see \cite{MKY}).

\begin{cor}\label{cor3} Let $G$ be a cactus graph. Then the following statements are equivalent:
\begin{enumerate}
\item $G$ is well-covered and vertex decomposable.
\item $G$ is Cohen-Macaulay.
\item $G$ satisfies two following conditions:
\begin{enumerate}
\item every vertex of degree 2 is incident with only one pendant edge or one basic 3-cycle or one basic 4-cycle or one basic 5-cycle;
\item every vertex of degree at least 3 is incident with only one pendant edge or one basic 3-cycle or one basic 5-cycle.
\end{enumerate}
\end{enumerate}
\end{cor}

Finally, we will characterize Cohen-Macaulay graphs containing no $4$- nor $5$-cycles as subgraphs. In particular, in such graphs no cliques of size greater than $3$ can exist.

\begin{cor}\label{T3} Let $G$ be a graph that contains neither $4$-cycles nor $5$-cycles as subgraphs. Then, the following conditions are equivalent:
\begin{enumerate}
\item[(1)] $G$ is Cohen-Macaulay.
\item[(2)] There are simplicial vertices $x_1,\ldots,x_m$ of $G$ such that $\deg_G(x_i) \leq 3$ for all $i$ and $N_G[x_1],\ldots,N_G[x_m]$ form a partition of $V(G)$.
\item[(3)] $G$ is a well-covered simplicial graph such that every simplicial vertex has degree at most $3$.
\end{enumerate}
\end{cor}
\begin{proof} $(1) \Longrightarrow (2)$ Since $G$ is a well-covered graph containing no $4$-cycles nor $5$-cycles, by \cite[Theorem 1.1]{FHN2}, $G$ is either a well-covered simplicial graph such that every simplicial vertex has degree at most $3$ or one of two exceptional graphs $C_7$ and $T_{10}$ shown in Figure \ref{Fig:2}. But both of $C_7$ and $T_{10}$ are not Cohen-Macaulay, so $G$ satisfies the condition as in the second statement.

\begin{figure}[ht!]
\begin{minipage}{.30\textwidth}
\begin{center}
\scalebox{0.5}{\includegraphics{c7}}\\
$C_7$
\end{center}
\end{minipage}
\begin{minipage}{.30\textwidth}
\begin{center}
\scalebox{0.5}{\includegraphics{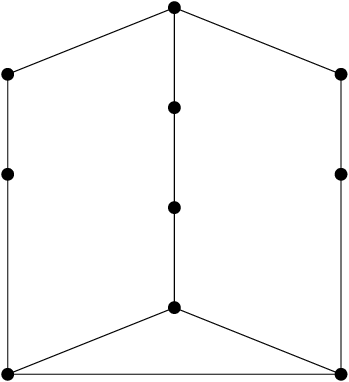}}\\
$T_{10}$
\end{center}
\end{minipage}
\caption{}\label{Fig:2}
\end{figure}

$(2) \Longleftrightarrow (3)$ and $(3) \Longrightarrow (1)$ hold true by \cite[Theorem 1]{PTV} and Lemma \ref{L03}.
\end{proof}

\section{Planar Gorenstein graphs of girth at least $4$}

In this section we characterize planar Gorenstein graphs of girth at least $4$. Recall that $W_2$ is the class of well-covered graphs $G$ such that $G\setminus x$ are well-covered with $\alpha(G)=\alpha(G\setminus x)$ for all vertices $x$. 

 First, we recall that a simplicial complex $\Delta$ is called {\it doubly Cohen-Macaulay} (over $k$) if $\Delta$ is Cohen-Macaulay (over $k$) and for every vertex $x$ of $\Delta$ the subcomplex $\Delta\setminus x$ is also Cohen-Macaulay (over $k$) of the same dimension as $\Delta$ (see \cite{B}). 
The restriction of $\Delta$ to a subset $W$ of the vertices set $V(\Delta)$ is $\Delta|_{W}=\{F\in\Delta~|~ F\subseteq W\}$. The star of a vertex $v$ in $\Delta$ is $\st_{\Delta}(v)=\{F\in\Delta~|~F\cup \{v\}\in\Delta\}$. Let $\core(\Delta)=\Delta|_{\{v\in V(\Delta)~|~\st_\Delta(v)\ne V(\Delta)\}}$. It is well known that if $\Delta$ is Gorenstein with $\core(\Delta)=\Delta$, then $\Delta$ is doubly Cohen-Macaulay (see \cite[Theorem II.5.1]{S}).

We have a necessary condition for Gorensteinness of graphs as follows.
\begin{lem}\label{G} Let $G$ be a Gorenstein graph without isolated vertices. Then, $G$ is a member of $W_2$.
\end{lem}
\begin{proof} Since $G$ is a Gorenstein graph without isolated vertices, $\core(\Delta(G)) = \Delta(G)$. Therefore, $\Delta(G)$ is doubly Cohen-Macaulay. It follows that for any vertex $x$ of $G$, we have $\Delta(G)\setminus x = \Delta(G\setminus x)$ is Cohen-Macaulay with
$$\alpha(G\setminus x) =\dim \Delta(G\setminus x) +1 = \dim \Delta(G)+1 = \alpha(G).$$
Thus, $G$ is in class $W_2$.
\end{proof}

Pinter \cite{Pi1} constructed an infinite family of graphs by a recursive procedure as follows:
\begin{enumerate}
\item Begin with the graph $G_3$ shown in Figure \ref{Fig:3};
\item Given any graph $G$ in the construction, let $x$ and $y$ be two adjacent points of degree $2$ in $G$. Let $u$ be the neighbor of $x$ such that $u \ne y$. Then construct a new graph $G'$ with precisely three more points than $G$ as follows. Let the three new points be $a, b$ and $c$. Now join $a$ to $x$ and $b$, $b$ to $c$ and $c$ to $u$ and $y$ (see Figure \ref{Fig:4}).
 \end{enumerate}

 \begin{figure}[!hb]
\begin{minipage}{5cm}
\scalebox{0.5}{\includegraphics{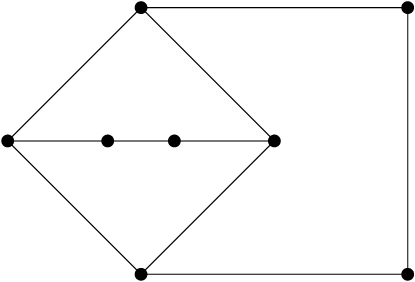}}
\caption{}\label{Fig:3}
\end{minipage}
\begin{minipage}{.30\textwidth}
\hspace{1em}\vspace{1.9ex}
 \scalebox{0.55}{ \includegraphics{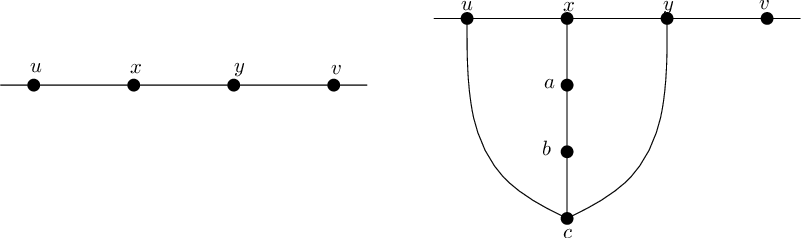}}
\caption{}\label{Fig:4}
\end{minipage}
\end{figure}

Let us denote this family by $\mathcal G$. Pinter proved the following result (see \cite{Pi1}).

\begin{lem} \label{P} A connected graph $G$ is a girth $4$ planar member of class $W_2$ if and only if $G$ is a member of the family $\mathcal G$.
\end{lem}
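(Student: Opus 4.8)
The plan is to prove the two implications separately: that every member of the family $\mathcal{G}$ is a connected planar girth-$4$ graph in $W_2$ by induction along the recursive construction, and conversely that every connected planar girth-$4$ graph in $W_2$ lies in $\mathcal{G}$ by a reduction argument that reverses a single construction step.

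For the forward implication I would first verify the four properties directly for the base graph $G_3$, and then show that one application of the operation preserves each of them. Connectedness is immediate, since each of the new vertices $a,b,c$ is attached to the existing graph. For the girth, one checks that no triangle is created: the new vertices have pairwise edge-disjoint neighbourhoods, and $u$ cannot be adjacent to $y$ (otherwise $x,y,u$ would already form a triangle, contradicting girth $4$); meanwhile the operation produces the $4$-cycle on $x,y,c,u$, so the girth remains exactly $4$. Planarity follows because $x$ has degree $2$, so both faces incident to $x$ carry $u$ and $y$ on their boundary, and the path $x\,a\,b\,c$ together with the edges $cu$ and $cy$ can be drawn inside one such face. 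The substantive point is the preservation of $W_2$: writing $G'$ for the graph obtained from $G$, I would compute $\alpha(G')$ in terms of $\alpha(G)$ and show, using the definition recalled above, that $G'$ is well-covered and that $\alpha(G'\setminus v)=\alpha(G')$ for every vertex $v$, by relating the maximal independent sets of $G'$ and of $G'\setminus v$ to those of $G$ and extending them across the gadget $\{a,b,c\}$.

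For the reverse implication I would induct on $|V(G)|$. Given a connected planar girth-$4$ graph $G\in W_2$ with $G\ne G_3$, the aim is to locate a reducible configuration, namely adjacent degree-$2$ vertices $a,b$ with $N(a)=\{x,b\}$ and $N(b)=\{a,c\}$, attached to a vertex $c$ whose remaining neighbours are a vertex $y$ and a neighbour $u$ of $x$, with $x$ adjacent to $y$. Deleting $a,b,c$ then returns $x$ and $y$ to degree $2$ and makes them an adjacent degree-$2$ pair, exactly the precondition of the construction. I would check that the smaller graph is again connected, planar, of girth $4$, and in $W_2$, so that the inductive hypothesis places it in $\mathcal{G}$ and hence exhibits $G$ as obtained from a member of $\mathcal{G}$ by one construction step.

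The hard part will be proving that a reducible configuration always exists in the reverse direction. This is where the three hypotheses must be used together: planarity forces many low-degree vertices through Euler's formula (with girth $4$ giving the bound $e\le 2v-4$, hence average degree below $4$), girth $4$ forbids triangles and the short chords that would otherwise collapse the sought configuration, and the rigidity of $W_2$ pins down the local neighbourhood of a minimum-degree vertex. I expect to need a case analysis ruling out every local pattern except the one that unwinds a construction step. A second delicate point is verifying that the reduction preserves $W_2$: deleting $a,b,c$ could in principle enlarge maximal independent sets elsewhere, so one must show the independence number drops in the controlled way matched by the forward computation.
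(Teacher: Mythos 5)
You should first note that the paper does not prove this statement at all: Lemma \ref{P} is quoted verbatim from Pinter \cite{Pi1} (``Pinter proved the following result''), and the classification it asserts is the main theorem of that reference, an entire paper in its own right. So there is no internal proof to match your argument against; the honest comparison is between your outline and what Pinter's proof must contain.

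Measured that way, your proposal has a genuine gap: the two steps you yourself flag as ``the hard part'' are the entire mathematical content of the lemma, and nothing in your text discharges them. Concretely, (i) in the forward direction you assert that the gadget operation preserves membership in $W_2$ but never carry out the computation relating maximal independent sets of $G'$ to those of $G$ (this is not routine: one must check well-coveredness of $G'\setminus v$ for \emph{every} vertex $v$, including $v\in\{a,b,c\}$ and $v\in\{x,y,u\}$ whose degrees the operation changes); (ii) in the reverse direction the existence of a reducible configuration in every connected planar girth-$4$ member of $W_2$ other than $G_3$ is precisely Pinter's case analysis, and you give no argument beyond naming the tools (Euler's formula, absence of triangles) that should enter it. You also do not address the base of the reverse induction, i.e.\ why $G_3$ is the unique minimal such graph and why no small graphs (e.g.\ $C_4$, $K_{2,3}$) survive the $W_2$ condition. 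The parts you do verify --- connectedness, girth preservation via the absence of the edge $uy$, and planarity using the two faces at the degree-$2$ vertex $x$ --- are correct but are the easy twenty percent. As written, the proposal is a plan for a proof rather than a proof, and in the context of this paper the appropriate ``proof'' is simply the citation of \cite[Pi1]{Pi1}.
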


In general, two graphs $G$ and $H$ are isomorphic, written $G\cong H$, if there is a bijection map $\varphi: V(G)\longrightarrow V(H)$ such that $uv\in E(G)\Longleftrightarrow \varphi(u)\varphi(v)\in E(H)$ for all $u, v\in V(G)$. Thus, $G$ and $H$ can be identified if they are isomorphic.

\begin{defn} For every integer $n\geq 1$, we define two graphs $G_n$ and $H_n$ as follows:
\begin{enumerate}
\item $G_n$ is a graph with the vertex set $\{x_1,\ldots,x_{3n-1}\}$ and the edge set
$$\{x_1x_2, \{x_{3k-1}x_{3k}, x_{3k}x_{3k+1}, x_{3k+1}x_{3k+2}, x_{3k+2}x_{3k-2}\}_{k=1,2,\ldots,n-1}, \{x_{3l-3}x_{3l}\}_{l=2,3,\ldots,n-1}\}$$

\begin{figure}[ht!]
\begin{center}
 \includegraphics[scale=0.7]{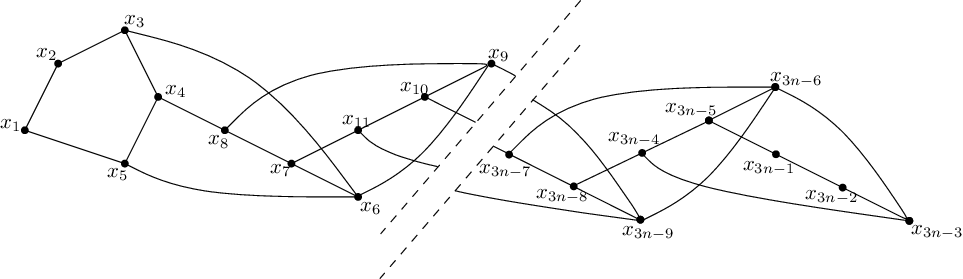}\\
  \caption[Figure 5.]{The graph $G_n$.}\label{Fig:5}
\end{center}
\end{figure}
\item $H_n =G_n \setminus x_{3n-1}$. It means that $V(H_n)=\{x_1,\ldots,x_{3n-2}\}$ and
\begin{align*}
E(H_n) =
\begin{cases}
\{x_1\} &\text{ if } n=1,\\
\{x_1x_2, x_2x_3,x_3x_4\} &\text{ if } n=2,\\
E(G_{n-1}) \cup \{x_{3n-2}x_{3n-3}, x_{3n-3}x_{3n-4}, x_{3n-3}x_{3n-6}\} &\text{ if } n\geq 3.
\end{cases}
\end{align*}

\begin{figure}[ht!]
\begin{center}
 \includegraphics[scale=0.7]{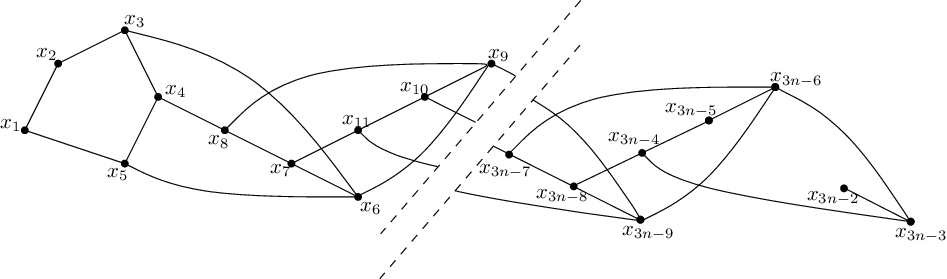}\\
  \caption[Figure 6.]{The graph $H_n$.}\label{Fig:6}
\end{center}
\end{figure}
\end{enumerate}
\end{defn}

In fact, from the construction of $\mathcal G$ and Lemma \ref{P}, we will obtain the following.

\begin{lem} A connected planar graph $G$ of girth $4$ is a member of $W_2$ if and only if $G\cong G_n$ for some $n\geq 3$.
\end{lem}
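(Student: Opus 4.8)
The plan is to deduce the statement from Pinter's Lemma \ref{P}: since a connected planar graph of girth $4$ lies in $W_2$ exactly when it belongs to the family $\mathcal{G}$, it suffices to prove that, up to isomorphism, $\mathcal{G} = \{G_n \mid n \geq 3\}$. I would prove this by induction on the number of steps used to build a member of $\mathcal{G}$ from the base graph $G_3$, establishing simultaneously that (i) $G_{n+1}$ is obtained from $G_n$ by a single application of Pinter's operation, whence $\{G_n \mid n\geq 3\} \subseteq \mathcal{G}$, and (ii) \emph{every} application of the operation to $G_n$ returns a graph isomorphic to $G_{n+1}$, whence $\mathcal{G} \subseteq \{G_n \mid n \geq 3\}$.

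The first preliminary is a purely combinatorial reading of $G_n$ from its edge list. I would note that $G_n$ is a chain of $n-1$ five-cycles $\Gamma_1,\ldots,\Gamma_{n-1}$, where $\Gamma_k$ is spanned by $x_{3k-2},\ldots,x_{3k+2}$, consecutive cycles share the edge $x_{3k+1}x_{3k+2}$, and the middle vertices $x_3,x_6,\ldots,x_{3(n-1)}$ are linked into a path by the edges $x_{3(l-1)}x_{3l}$. A direct degree count then shows that the only vertices of degree $2$ are $x_1,x_2$ and $x_{3n-2},x_{3n-1}$, so the only pairs of adjacent degree-$2$ vertices are the two ends $\{x_1,x_2\}$ and $\{x_{3n-2},x_{3n-1}\}$.

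Next I would exhibit the symmetry that makes the operation well defined up to isomorphism. Checking on the edge list, the reflection $\rho(x_i)=x_{3n-i}$ is an automorphism interchanging the two ends, and the involution $\tau$ transposing $x_{3n-2}\leftrightarrow x_{3n-1}$ and $x_{3n-3}\leftrightarrow x_{3n-5}$ while fixing every other vertex is also an automorphism (both $x_{3n-3}$ and $x_{3n-5}$ are adjacent to $x_{3n-6}$ and $x_{3n-4}$, which is exactly what lets $\tau$ preserve edges). Together with $\rho\tau\rho$, these automorphisms act transitively on the four ordered pairs $(x,y)$ of adjacent degree-$2$ vertices. Finally I would verify by matching edges that applying Pinter's operation with $x=x_{3n-2}$, $y=x_{3n-1}$ (so that $u=x_{3n-3}$) and relabelling the three new points as $c=x_{3n}$, $b=x_{3n+1}$, $a=x_{3n+2}$ reproduces exactly the edge set of $G_{n+1}$, which gives (i); and since every other admissible choice of $(x,y)$ is carried to this one by an automorphism of $G_n$, extending that automorphism by the identity on $\{a,b,c\}$ shows the constructed graph is again isomorphic to $G_{n+1}$, which gives (ii).

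The main obstacle is the orientation-dependence of Pinter's operation: because $u$ is the neighbour of $x$ distinct from $y$, the two labellings of one end-pair yield genuinely different attachments (one attaching edge runs to the middle vertex $x_{3n-3}$, the other to the corner $x_{3n-5}$), and a priori such a graph could lie in $\mathcal{G}$ but outside $\{G_n\}$. The crux is therefore the automorphism $\tau$, which identifies these two attachments; isolating $\tau$, rather than grinding out an explicit isomorphism between the two constructed graphs, is what keeps the induction clean. The remaining work—the degree count and the three edge-matching verifications—is routine bookkeeping once the chain description of $G_n$ is in hand.
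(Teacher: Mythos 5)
Your proposal is correct and follows the same route as the paper: the paper simply asserts that the recursive family $\mathcal G$ coincides with $\{G_n\}_{n\geq 3}$ and invokes Lemma \ref{P}, offering no further argument. Your induction supplies exactly the missing verification — the degree count showing the only adjacent degree-$2$ pairs are the two ends, the automorphisms $\rho$ and $\tau$ (whose stated adjacencies $N(x_{3n-3})=\{x_{3n-4},x_{3n-6},x_{3n-2}\}$ and $N(x_{3n-5})=\{x_{3n-4},x_{3n-6},x_{3n-1}\}$ do check out against the edge list), and the identification $c=x_{3n}$, $b=x_{3n+1}$, $a=x_{3n+2}$ — and is sound.
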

\begin{proof}
We claim that a graph $L\in \mathcal G$ if and only if $L\cong G_n$ for some $n\geq 3$.

By the construction, $G_3\in \mathcal G$. We will reconstruct $G_{n+1}$ from $G_n$ (for $n\ge 3$) as in the definition of $\mathcal G$ by $x=x_{3n-2}; y=x_{3n-1}$ and the three new points $a=x_{3n+2}, b=x_{3n+1}$ and $c=x_{3n}$. It implies that $G_{n+1}\in \mathcal G$. 

Conversely, assume $L\in \mathcal G$ and $L\cong G_n$ for some $n\geq 3$. It is easy that there are only or $x_1$ and $x_2$ two adjacent points of degree $2$ or  $x_{3n-2}$ and $x_{3n-1}$ two adjacent points of degree $2$ in $G$ (by induction on $n\geq 3$). By rewrite the order of vertices, we can take $x=x_{3n-2}$ and $y=x_{3n-1}$; set three new points $a=x_{3n+2}, b=x_{3n+1}$ and $c=x_{3n}$ as in the construction of $\mathcal G$. It is clear that the new graph and $G_{n+1}$ is isomorphic. 

Using this claim and Lemma \ref{P}, our assertion is proved.
\end{proof}

Note that a graph is Gorenstein if and only if every its component is Gorenstein. Thus, it suffices to characterize connected planar Gorenstein graphs. Let $G$ be a connected graph in the class $W_2$.  Pinter \cite{Pi2} proved that if $G\ne K_2$ or $C_5$ then $\girth(G)\leq 4$. Thus, connected Gorenstein graphs with girth at least $5$ is one of three graphs $K_1, K_2$ and $C_5$.  So the structure of connected Gorenstein graphs is non trivial only for the ones of girth $3$ or $4$. In the last theorem, we will  give a complete characterization of a Gorenstein connected planar graph of girth $4$. Recall that the union of graphs $G$ and $H$ is the graph $G \cup H$ with vertex set $V(G) \cup V (H)$ and edge set $E(G) \cup E(H)$. If $G$ and $H$ are disjoint, we refer to their union as a disjoint union, and generally denote it by $G \sqcup H$. Firstly, we have the following observation.

\begin{rem}\label{L01} Let $G$ be a graph and a point $x\in V(G)$. If both of $G_x$  and $G\setminus x$ are well-covered with $\alpha(G\setminus x)=\alpha(G_x)+1$, then $G$ is also well-covered with $\alpha(G)=\alpha(G\setminus x)$.
\end{rem}

Now we prove the vertex decomposability of $G_n$ and $H_n$.

\begin{lem} \label{faCM} For all integers $n\geq 1$, both of $G_n$ and $H_n$ are well-covered and vertex decomposable with $\alpha(G_n)=\alpha(H_n)=n$. In particular, $G_n$ and $H_n$ are Cohen-Macaulay.
\end{lem}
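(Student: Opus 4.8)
The plan is to prove the two assertions together by strong induction on $n$: for every $n\geq 1$, both $G_n$ and $H_n$ are well covered and vertex decomposable with $\alpha(G_n)=\alpha(H_n)=n$. The Cohen--Macaulay conclusion then follows at once, since vertex decomposable graphs are Cohen--Macaulay (as already used in Theorem \ref{T1}). In the inductive step I would apply Lemma \ref{lemVD} together with Remark \ref{L01}, splitting $H_n$ at the degree-three vertex $w=x_{3n-3}$ and $G_n$ at the degree-two ``tip'' $v=x_{3n-1}$. The base cases $n=1,2$ are immediate: $G_1=K_2$ and $H_1=K_1$ are simplices; $G_2=C_5$ is vertex decomposable by Lemma \ref{L04} (it is a single basic $5$-cycle); and $H_2=P_4$ is a path, hence a forest, and is well covered with $\alpha=2$. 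So assume $n\geq 3$ and that the statement holds for all smaller indices. I would treat $H_n$ first, since the reduction for $G_n$ will invoke $H_n$ at the same index.

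For $H_n$, note that $w=x_{3n-3}$ has neighbours $x_{3n-2}$ (a pendant), $x_{3n-4}$ and $x_{3n-6}$, so $N_{H_n}[w]=\{x_{3n-3},x_{3n-2},x_{3n-4},x_{3n-6}\}$. Since every edge adjoined to $G_{n-1}$ in forming $H_n$ is incident to $w$, we get $H_n\setminus w = G_{n-1}\sqcup\{x_{3n-2}\}$, the disjoint union of $G_{n-1}$ with an isolated vertex. Likewise $(H_n)_w = G_{n-1}\setminus\{x_{3n-4},x_{3n-6}\}$; in this graph the former tip $x_{3n-5}$ is isolated (its only neighbours $x_{3n-4},x_{3n-6}$ are deleted), and removing $x_{3n-5}$ as well recovers $G_{n-2}$ exactly, because the last block and last chord of $G_{n-1}$ are destroyed while every earlier block and chord survives unchanged. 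Hence $(H_n)_w\cong G_{n-2}\sqcup K_1$. By induction $G_{n-1},G_{n-2}$ are vertex decomposable with $\alpha(G_{n-1})=n-1$ and $\alpha(G_{n-2})=n-2$; adjoining an isolated vertex (which lies in every maximal independent set) preserves vertex decomposability by Lemma \ref{lemCM} and raises the independence number by $1$. Thus $H_n\setminus w$ and $(H_n)_w$ are vertex decomposable with $\alpha(H_n\setminus w)=n$ and $\alpha((H_n)_w)=n-1$.

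For $G_n$ I would take $v=x_{3n-1}$, of degree $2$ with $N_{G_n}(v)=\{x_{3n-2},x_{3n-5}\}$. By the definition of $H_n$ we have $G_n\setminus v=H_n$, already handled above. The key combinatorial point is that $(G_n)_v=G_n\setminus\{x_{3n-1},x_{3n-2},x_{3n-5}\}$ is isomorphic to $G_{n-1}$: after deleting the two tips $x_{3n-1},x_{3n-2}$ and the shared vertex $x_{3n-5}$ of the last pentagon, the vertex $x_{3n-3}$ is left adjacent only to $x_{3n-4}$ and $x_{3n-6}$, which are precisely the two neighbours of the tip $x_{3n-5}$ in $G_{n-1}$. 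Consequently the relabelling fixing $x_1,\dots,x_{3n-6},x_{3n-4}$ and sending $x_{3n-3}\mapsto x_{3n-5}$ is a graph isomorphism $(G_n)_v\cong G_{n-1}$. I would verify this by checking that the only neighbourhoods altered by the deletion, namely those of $x_{3n-6}$, $x_{3n-4}$ and $x_{3n-3}$, agree with the corresponding neighbourhoods in $G_{n-1}$ under this relabelling. Thus $(G_n)_v$ is vertex decomposable with $\alpha((G_n)_v)=n-1$, while $G_n\setminus v=H_n$ is vertex decomposable with $\alpha=n$.

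It remains to assemble the pieces via Woodroofe's criterion. In each reduction the two relevant subgraphs are well covered and satisfy $\alpha(G_*\setminus u)=\alpha((G_*)_u)+1$; by Remark \ref{L01} the ambient graph is then well covered with $\alpha=\alpha(G_*\setminus u)$, which yields $\alpha(G_n)=\alpha(H_n)=n$. Moreover every independent set of $(G_*)_u$ is an independent set of $G_*\setminus u$ of size at most $\alpha((G_*)_u)<\alpha(G_*\setminus u)$, so it cannot be maximal in $G_*\setminus u$; this is exactly condition $(2)$ of Lemma \ref{lemVD}, while condition $(1)$ holds by induction. Hence $G_n$ and $H_n$ are vertex decomposable, and in particular Cohen--Macaulay. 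The main obstacle is the explicit identification of the two link graphs, especially $(G_n)_v\cong G_{n-1}$: once the relabelling $x_{3n-3}\leftrightarrow x_{3n-5}$ is pinned down and the three altered neighbourhoods are matched, the remaining well coveredness and the size count for condition $(2)$ are routine.
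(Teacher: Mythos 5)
Your proposal is correct and follows essentially the same route as the paper: induction on $n$, splitting $H_n$ at $x_{3n-3}$ to get $H_n\setminus x_{3n-3}=G_{n-1}\sqcup K_1$ and $(H_n)_{x_{3n-3}}\cong G_{n-2}\sqcup K_1$, then splitting $G_n$ at $x_{3n-1}$ to get $G_n\setminus x_{3n-1}=H_n$ and $(G_n)_{x_{3n-1}}\cong G_{n-1}$, and concluding via Remark \ref{L01} and Lemma \ref{lemVD}. You supply slightly more detail than the paper (the explicit relabelling $x_{3n-3}\mapsto x_{3n-5}$ and the verification of condition $(2)$ of Lemma \ref{lemVD} from well coveredness), and the only blemish is calling $x_{3n-5}$ the ``tip'' of $G_{n-1}$ (that is $x_{3n-4}$), which does not affect the argument.
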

\begin{proof} We will prove by induction on $n$. If $n=1$ or $n=2$, then the lemma obviously holds true. If $n\geq 3$, since $H_n \setminus x_{3n-3} = G_{n-1} \sqcup  \{x_{3n-2}\}$, so $H_n \setminus x_{3n-3}$ is well-covered and vertex decomposable with $\alpha(H_n \setminus x_{3n-3}) = \alpha(G_{n-1}) + 1 = n$ by induction and Lemma $\ref{lemCM}$.
On the other hand, it is clear that $(H_n)_{x_{3n-3}} = G_{n-2} \sqcup \{x_{3n-5}\}$. Also by induction and Lemma $\ref{lemCM}$, $(H_n)_{x_{3n-3}}$ is well-covered and vertex decomposable with $\alpha((H_n)_{x_{3n-3}}) = \alpha(G_{n-2}) + 1 = n-1 = \alpha(H_n \setminus x_{3n-3}) - 1$. Therefore, $H_n$ is well-covered with $\alpha(H_n) = \alpha(H_n \setminus x_{3n-3}) = n$ (by Remark $\ref{L01}$) and vertex decomposable (by Lemma $\ref{lemCM}$).

Moreover, $G_n \setminus x_{3n-1} = H_n$ is well-covered and vertex decomposable with $\alpha(G_n \setminus x_{3n-1}) = n$ has done.  Let the map $\varphi: V(G_{n-1}) \longrightarrow V((G_n)_{x_{3n-1}})$ by $\varphi(x_i)=x_i$ for all $1\le i\le 3n-6$ or $i=3n-4$ and $\varphi(x_{3n-5})=x_{3n-3}$. It is clear that $\varphi$ is an isomorphism of two graphs $G_{n-1}$ and $(G_n)_{x_{3n-1}}$. Then, by induction, $(G_n)_{x_{3n-1}}$ is vertex decomposable and $\alpha((G_n)_{x_{3n-1}}) = n-1$. Thus, $G_n$ is vertex decomposable by  Lemma $\ref{lemCM}$. It is clear that $G_n$ is well-covered with $\alpha(G_n)=n$, which is complete the proof.
\end{proof}

Next, we prove that $I(G_n)^2$ is Cohen-Macaulay for all integers $n\geq 1$. This settles a conjecture of G. Rinaldo, N. Terai and K. Yoshida \cite[Conjecture 5.7]{RTY}. The case $n=1$ is known in \cite[Theorem 3.2]{MT} and the case $n=2$ is also mentioned in \cite[Theorem 3.8 (iv)]{TrT}.

\begin{prop} \label{EG1} $I(G_n)^2$ are Cohen-Macaulay for all integers $n\geq 1$.
\end{prop}

\begin{proof} Note that $G_n$ is a triangle-free graph, so $I(G_n)^2 = I(G_n)^{(2)}$ (see e.g. \cite[Corollary 4.5]{RTY}). Therefore, it suffices to prove that $I(G_n)^{(2)}$ is Cohen-Macaulay.

If $n=1$ (resp. $n=2$) then $G_n$ is an edge (resp. a pentagon). Thus $I(G_n)^{(2)}$ is Cohen-Macaulay.

If $n\geq 3$, by Lemma $\ref{faCM}$ and \cite[Theorem 2.3]{HMT}, it is enough to prove that $(G_n)_{xy}$ is Cohen-Macaulay with $\alpha((G_n)_{xy}) = n-1$ for every edge $xy\in E(G_n)$; where $(G_n)_{xy}$ stands for $G_n\setminus (N_{G_n}(x) \cup N_{G_n}(y))$. We distinguish six following cases:

\textbf {Case 1:}  $xy = x_1x_2$. Clearly, $(G_n)_{x_1x_2} \cong H_{n-1}$. Therefore, by Lemma $\ref{faCM}$, $(G_n)_{x_1x_2}$ is Cohen-Macaulay with $\alpha((G_n)_{x_1x_2}) = n$.

\textbf {Case 2:} $xy = x_{3k-1}x_{3k}$ for some $k = 1,\ldots,n-1$. Observe that
$$
(G_n)_{x_{3k-1}x_{3k}}=
\begin{cases}
U_1\sqcup \{x_5\} &\text{ if } k = 1, \text{where } U_1\cong G_{n-2} \quad (1)\\
U_2\sqcup \{x_{3n-1}\} & \text{ if } k = n-1, \text{where } U_2\cong G_{n-2} \qquad (2)\\
U_3\sqcup\{x_2\}\sqcup \{x_8\}& \text{ if } k=2, \text{where } U_3\cong G_{n-3} \quad (3)\\
M\sqcup N\sqcup  \{x_{3k+2} \}& \text{ if } 3\leq k< n-1, \quad \qquad \qquad (4)\\
\end{cases}
$$
where $M=G_n[\{x_1,\ldots,x_{3k-6},x_{3k-4}\}]$ and $N=G_n[\{x_{3k+4},\ldots,x_{3n-1}\}]$.

In the three cases $(1)-(3)$, using Lemma \ref{faCM} and Lemma \ref{lemCM}, we have  $(G_n)_{x_{3k-1}x_{3k}}$ is always Cohen-Macaulay with $\alpha((G_n)_{x_{3k-1}x_{3k}})=n-1$. In the last case, we define the map $\varphi: V(H_{k-1}) \longrightarrow V(M)$ as follows:

If $k=3$ then $\varphi(x_i) = x_i$ for all $i = 1, 2, 3$ and $\varphi(x_4)=x_5$.

If $k>3$ then $\varphi(x_i) = x_i$ for all $i = 1,\ldots,3k-9$; $\varphi(x_{3k-8}) = x_{3k-6},\varphi(x_{3k-7}) = x_{3k-7},\varphi(x_{3k-6}) = x_{3k-8}$; and  $\varphi(x_{3k-5}) = x_{3k-4}$.\\
Clearly, $\varphi$ is an isomorphism of two graphs $H_{k-1}$ and $M$. Therefore, $M$ must be Cohen-Macaulay with $\alpha(M)=k-1$ by Lemma \ref{faCM}. Similarly, we have a bijection map $\psi: V(G_{n-k-1}) \longrightarrow V(N)$ is defined by $\psi(x_i)=x_{3k+3+i}$ for all $i=1\ldots,3n-3k-4$. So $G_{n-k-1}\cong N$. Using again Lemma $\ref{faCM}$, $N$ is Cohen-Macaulay with $\alpha(N)=n-k-1$. Thus, $(G_n)_{x_{3k-1}x_{3k}}$ is Cohen-Macaulay with
$$\alpha((G_n)_{x_{3k-1}x_{3k}}) = (k-1) +  (n-k-1)+1 = n-1.$$

\noindent By the same argument, we will obtain the following.

\textbf {Case 3:} $xy = x_{3k}x_{3k+1}$ for some $k=1,\ldots,n-1$. Then,
$$
(G_n)_{x_{3k}x_{3k+1}}\cong
\begin{cases}
H_{n-2}\sqcup \{x_1\}&\text{ if } k = 1\\
G_{k-1} \sqcup G_{n-k-1}\sqcup\{x_{3k-2}\}& \text{ if } k\geqslant 2.
\end{cases}
$$

\textbf {Case 4:} $xy = x_{3k+1}x_{3k+2}$ for some $k=1,\ldots,n-1$. Then,
$$ (G_n)_{x_{3k+1}x_{3k+2}}\cong H_{k-1} \sqcup H_{n-k-1}\sqcup\{x_{3k-1}\}.$$

\textbf {Case 5:} $xy = x_{3k+2} x_{3k-2}$ for some $k=1,\ldots,n-1$. Then,
$$(G_n)_{x_{3k+2}x_{3k-2}}\cong G_{k-1} \sqcup G_{n-k-1}\sqcup\{x_{3k}\}.$$

\textbf {Case 6:} $xy = x_{3k} x_{3k-3}$ for some $2\leq k\leq n-1$. Then,
$$(G_n)_{x_{3k+2}x_{3k-2}}\cong G_{k-1}\sqcup G_{n-k-2}\sqcup\{x_{3k-5}\}\sqcup\{x_{3k+2}\}.$$

\noindent From six cases above, for every edge $xy\in E(G_n)$, we always obtain that $(G_n)_{xy}$ is Cohen-Macaulay with $\alpha((G_n)_{xy}) = n-1$ which completes the proof.
\end{proof}

We are ready to prove the main result of this section.

\begin{thm} \label{T4} Let $G$ be a connected planar graph of girth $4$. Then, $G$ is Gorenstein if and only if $G$ is in the family $\mathcal G$.
\end{thm}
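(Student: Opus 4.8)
The plan is to prove the two implications separately, noting first that a connected graph of girth exactly $4$ contains a $4$-cycle and hence has no isolated vertex. For necessity, suppose $G$ is Gorenstein. Since $G$ has no isolated vertex, $\core(\Delta(G)) = \Delta(G)$, so $\Delta(G)$ is doubly Cohen-Macaulay and therefore $G$ belongs to $W_2$, exactly as in the discussion preceding Remark~\ref{L01}. As $G$ is in addition connected, planar and of girth $4$, the classification of such graphs in $W_2$ (the reformulation of Lemma~\ref{P} in terms of the graphs $G_n$) gives $G \cong G_n$ for some $n \geq 3$, i.e. $G \in \mathcal G$.

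For sufficiency I must show that every $G_n$ with $n \geq 3$ is Gorenstein. By Lemma~\ref{faCM}, $G_n$ is Cohen-Macaulay with $\dim \Delta(G_n) = n - 1$ and $\core(\Delta(G_n)) = \Delta(G_n)$, so it is enough to prove, over an arbitrary field $k$, that $\Delta(G_n)$ is a homology $(n-1)$-sphere. I would induct on $n$ using the recursive criterion that a Cohen-Macaulay complex $\Delta$ with $\core(\Delta) = \Delta$ is a homology $(d-1)$-sphere as soon as $\widetilde{H}_{d-1}(\Delta; k) \cong k$ and $\lk_\Delta(v)$ is a homology $(d-2)$-sphere for every vertex $v$. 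The first condition is the easy one: Cohen-Macaulayness forces the reduced homology into top degree, so it is read off from the reduced Euler characteristic, which for an independence complex equals $\widetilde{\chi}(\Delta(G)) = -I(G,-1)$, where $I(G,x) = \sum_k s_k x^k$ and $s_k$ counts the $k$-element independent sets. Writing $e(G) = I(G,-1)$ and using the recursion $e(G) = e(G\setminus v) - e(G_v)$, the multiplicativity $e(G \sqcup H) = e(G)e(H)$, and $e(\text{point}) = 0$, the splittings $G_n \setminus x_{3n-1} = H_n$, $(G_n)_{x_{3n-1}} \cong G_{n-1}$ and the two splittings of $H_n$ recorded in Lemma~\ref{faCM} yield $e(H_n) = 0$ and $e(G_n) = -e(G_{n-1})$; since $e(G_1) = -1$ this gives $e(G_n) = (-1)^n$, hence $\widetilde{H}_{n-1}(\Delta(G_n); k) \cong k$.

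The main obstacle is the second, local condition. I would isolate it as a structural lemma: for \emph{every} vertex $v$ of $G_n$ the induced subgraph $G_n \setminus N[v]$ is a disjoint union of copies of the graphs $G_i$ with $i < n$, and --- what makes the sphere condition work --- neither an $H_i$ nor an isolated vertex ever appears among the pieces (either would force $e = 0$, incompatible with a homology sphere). Granting this, $\lk_{\Delta(G_n)}(v) = \Delta(G_n \setminus N[v])$ is the join of the complexes $\Delta(G_i)$, each a homology sphere by the induction hypothesis, and a join of homology spheres is again a homology sphere, here of the required dimension $n-2$. This verifies the criterion and closes the induction, so $\Delta(G_n)$ is a homology sphere and $G_n$ is Gorenstein over every field.

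The real work is the structural lemma, and I expect it to be the hardest step. Its proof is a case analysis on the location of $v$ in the recursively constructed graph $G_n$ --- in effect on the residue of its index modulo $3$ and on its proximity to the two ends of the construction --- showing that deleting $N[v]$ cuts $G_n$ into pentagon-and-edge blocks, each identified with some $G_i$ by an explicit relabelling, in the same spirit as the computation of the subgraphs $(G_n)_{xy}$ in the proof of Proposition~\ref{EG1}. Once the single-vertex case is known for all $G_m$, the analogous decomposition of $G_n \setminus N[F]$ for an arbitrary independent set $F$ follows by induction on $|F|$ (distributing $F$ over the components produced at the first deletion), so only the single-vertex statement needs to be checked by hand.
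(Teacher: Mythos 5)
Your necessity argument is exactly the paper's: Gorenstein plus no isolated vertices gives $\core(\Delta(G))=\Delta(G)$, hence doubly Cohen--Macaulay, hence $G\in W_2$, and Pinter's classification (Lemma \ref{P}) yields $G\cong G_n$. For sufficiency, however, you take a genuinely different route. The paper never mentions homology spheres: it proves (Proposition \ref{EG1}) that $I(G_n)^2$ is Cohen--Macaulay --- by reducing, via a cited criterion of Hoang--Minh--Trung, to checking that $(G_n)_{xy}=G_n\setminus(N(x)\cup N(y))$ is Cohen--Macaulay with independence number $n-1$ for each of six types of edges $xy$ --- and then invokes the Rinaldo--Terai--Yoshida theorem to conclude that $G_n$ is Gorenstein. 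You instead verify Stanley's Gorenstein criterion directly: concentration of homology in top degree from Cohen--Macaulayness, the Euler-characteristic recursion (your computation $e(H_n)=0$, $e(G_n)=(-1)^n$ is correct), and homology-sphere links by induction on $n$. What this buys is self-containedness: no symbolic powers, no appeal to the two external theorems, and a topological explanation of why $G_n$ is Gorenstein over every field. What it costs is that the unavoidable case analysis migrates from edges to vertices.

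That is also where the gap sits. Your entire sufficiency direction funnels through the structural lemma that for every vertex $v$ the graph $G_n\setminus N[v]$ is a disjoint union of copies of the $G_i$, with no isolated vertex and no $H_i$ among the pieces; you state it, correctly identify it as the hard step, and then only gesture at a case analysis. As written the proposal therefore does not prove that $G_n$ is Gorenstein. The lemma is true (one can verify it by hand for $G_3$ and $G_4$, and the recursive construction of $\mathcal G$ makes the pattern visible), but its proof is comparable in length to the six-case analysis in Proposition \ref{EG1} and cannot be omitted --- it is the whole content of the direction. The surrounding scaffolding is sound: well-coveredness of $G_n$ forces $\alpha(G_n\setminus N[v])=n-1$, so the join of the $\Delta(G_i)$ automatically has dimension $n-2$; a join of homology spheres over a field is a homology sphere; and once every vertex link is a homology sphere the condition on links of arbitrary faces follows by recursion, so your final paragraph about general independent sets $F$ is not even needed as a separate step.
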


\begin{proof} Note that $G$ has no isolated vertices. Assume that $G$ is Gorenstein, then $G$ is in the class $W_2$ by Lemma $\ref{G}$. Hence, $G$ is in the family $\mathcal G$ by Lemma \ref{P}.

Conversely, if $G$ is a member of $\mathcal G$, we may assume that $G=G_n$ for some $n\geq 3$. By Proposition $\ref{EG1}$, we have $I(G_n)^2$ is Cohen-Macaulay over any field $k$. This fact, together with \cite[Theorem 2.1]{RTY}, implies that $G_n$ is Gorenstein, as required.
\end{proof}

\subsection*{Acknowledgment} We would like to thank Professors L. T. Hoa, R. Woodroofe and S. A. Seyed Fakhari  for helpful comments. We also thank to the referee for their very useful corrections and suggestions. A part of this work was carried out while the second and the third authors visited Genoa University under the support from EMMA in the framework of the EU Erasmus Mundus Action $2$; we would like to thank Professor A. Conca for support and hospitality. The first author is partially supported by the National Foundation for Science and Technology Development (Vietnam) under grant number 101.01-2012.18.


\end{document}